\documentclass[draft,reqno]{amsart}

\usepackage{amsmath}
\usepackage{amssymb}
\usepackage[all]{xy}
\usepackage{picinpar}
\usepackage{palatino}
\usepackage[usenames,dvipsnames,svgnames,table]{xcolor}
\usepackage{mathtools}

\def\eu{\mathfrak}
\def\ma{\mathbb}
\def\mc{\mathcal}

\def\p{P_{\infty}}
\def\f{{\ma F}_q^{\ast}}
\def\F{{\ma F}_q}

\def\fin{\hfill\qed\bigskip}
\def\ge#1{#1_{\eu{gex}}}
\def\gv#1{#1_{{\eu{ge}},w}}
\def\gev#1{#1_{{\eu{gex}},w}}
\def\g#1{#1_{\eu{ge}}}
\def\*#1{#1^*}
\def\cicl#1#2{k(\Lambda_{{#1}^{#2}})}

\def\polyt#1{P_1^{\alpha_{1,#1}}\cdots P_r^{\alpha_{r,#1}}}
\def\polynr#1{{\mc P}_{#1,1} \cdots {\mc P}_{#1,s_#1}}
\def\polyn#1{{\mc P}_1^{\alpha_1}{\mc P}_2^{\alpha_2}\cdots {\mc P}_{#1-1}^{\alpha_{#1-1}}
{\mc P}_{#1}^{\alpha_{#1}}}
\def\polynn#1{{\mc P}_2^{\alpha_1}{\mc P}_3^{\alpha_2}\cdots {\mc P}_{#1}^{\alpha_{#1-1}}
{\mc P}_{1}^{\alpha_{#1}}}
\def\polynnn#1{{\mc P}_1^{{\eta}^{#1-1}}{\mc P}_2^{{\eta}^{#1-2}}\cdots {\mc P}_{#1-1}^{\eta}
{\mc P}_{#1}}
\def\pol#1#2{{\mc P}_{#1,1}^{\alpha_{#1,1,#2}}{\mc P}_{#1,2}^{\alpha_{#1,2,#2}}
\cdots {\mc P}_{#1,s_{#1-1}}^{\alpha_{#1,s_#1-1,#2}}{\mc P}_{#1,s_{#1}}^{\alpha_{#1,s_#1,#2}}}
\def\poly#1{{\mc P}_{#1,1}^{\eta^{s_#1-1}}{\mc P}_{#1,2}^{\eta^{s_#1-2}}
\cdots {\mc P}_{#1,s_{#1-1}}^{\eta}{\mc P}_{#1,s_{#1}}}

\def\v#1{#1_w}
\def\Ku#1#2{k\big(\sqrt[#1]{#2}\big)}
\def\Kuu#1#2{k_w\big(\sqrt[#1]{#2}\big)}

\def\z#1{\zeta_{l^#1}}

\def\Witt#1{\stackrel{_{\bullet}}{#1}}

\newcommand{\ord}{\operatorname{ord}}
\newcommand{\Gal}{\operatorname{Gal}}
\newcommand{\lcm}{\operatornamewithlimits{lcm}}

\newcommand{\con}{\operatorname{con}}

\newcounter{bean}
\newcounter{2bean}

\def\l{
\begin{list}
{\rm{(\alph{bean}).-}}{\usecounter{bean}
\setlength{\labelwidth}{0.8in}
\setlength{\labelsep}{0.3cm}
\setlength{\leftmargin}{1cm}}}

\def\las{\begin{list}
	{{\rm {(\arabic{2bean})}}}{\usecounter{2bean}
\setlength{\labelwidth}{0.8in}
\setlength{\labelsep}{0.3cm}
\setlength{\leftmargin}{1cm}}}

\numberwithin{equation}{section}
\newtheorem{theorem}{Theorem}[section]
\newtheorem{proposition}[theorem]{Proposition}

\newtheorem{remark}[theorem]{Remark}

\title[Function field genus theory for non-Kummer extensions]
{Function field genus theory for non-Kummer extensions}
 
\author[M. Rzedowski]{Martha Rzedowski--Calder\'on}
\address{Departamento de Control Autom\'atico\\
Centro de Investigaci\'on y de Estudios Avanzados del I.P.N.}
\email{mrzedowski@ctrl.cinvestav.mx}

\author[G. Villa]{Gabriel Villa--Salvador}
\address{Departamento de Control Autom\'atico\\
Centro de Investigaci\'on y de Estudios Avanzados del I.P.N.}
\email{gvillasalvador@gmail.com, gvilla@ctrl.cinvestav.mx}

\subjclass[2010]{Primary 11R58; Secondary 11R29, 11R60}

\keywords{Global fields, genus fields, non-Kummer extensions,
cyclic extensions, abelian extensions}

\date{April 4, 2022}

\begin{document}

\begin{abstract}

In this paper we first obtain the genus field of a finite abelian
non-Kummer
$l$--extension of a global rational function field. Then, using that
the genus field of a composite of two abelian extensions
of a global rational function field
with relatively prime degrees is equal to the composite
of their respective genus fields and our previous results,
we deduce the general
expression of the genus field of a finite abelian extension
of a global rational function field.

\end{abstract}

\maketitle

\section{Introduction}\label{S1}

The concepts of  Hilbert Class Field (HCF) $K_H$ and
narrow or extended Hilbert Class Field 
$K_{H^+}$, of a fixed number field $K$,
are canonically defined as the maximal abelian unramified 
abelian extension of $K$ and as
the maximal abelian extension unramified at the finite primes of $K$,
respectively. The function field case is different because the direct definition
of the HCF over a global function field $K$ as the maximal unramified
abelian extension has the inconvenience of being of infinite degree over
$K$. 

There are several different possible definitions of HCF of a global
function field $K$. The one we will be using is that for a fixed
finite nonempty set $S$ of places of $K$, the HCF
$K_{H}$ of $K$ is defined
as the maximal unramified abelian extension of $K$ where all the
places of $S$ decompose fully. The extension $K_H/K$ is a 
finite extension with Galois group isomorphic, via the Artin
Reciprocity Law, to the class group $Cl_S$ of the Dedekind
domain consisting of the elements of $K$ regular away from $S$.
The genus field of $K$ over a subfield $k$ is defined as $\g K= K\*k$,
where $\*k$ is the maximal abelian extension of $k$ contained in
$K_H$.

We are interested in the case of the rational function
field $k=\F(T)$ and $K/k$ a finite abelian extension. In \cite{Leo53},
H. Leopoldt studied the extended genus field $\ge K$ of a
finite abelian extension of the field of rational numbers
${\ma Q}$, by means of Dirichlet characters. Using Leopoldt's technique
we applied Dirichlet characters to the function field case and
found a general description of $\g K$ (\cite{BaMoReRzVi2018,
MaRzVi2013, MaRzVi2017}). In these papers it was also provided
an explicit description of $\g K$ in the cases of a Kummer cyclic
extension of prime degree $l$ and of an abelian $p$-extension where
$p$ is the characteristic of $k$. In \cite{BaRzVi2013, MoReVi2019, RzeVil2021},
the explicit description of $\g K$ was given when $K/k$ is a finite
Kummer $l$-extension with $l$ a prime number.

In this paper, we study the explicit description of the genus
field $\g K$  of the remaining
case: $K/k$ a finite abelian non-Kummer $l$-extension
with $l\neq p$, the characteristic of $K$. Using
this explicit description and the results of \cite{BaMoReRzVi2018}
and \cite{RzeVil2021},
we have the explicit description of $\g K$ of any finite abelian extension
$K/k$. By explicit description, we mean to give $\gv K$ in terms
of radical extensions, where $\gv K/\g K$ is the extension of constants
$\gv K=\g K{\ma F}_{q^w}$.

The main tool to find $\gv K$ is that, given $K/k$ a finite $l$-cyclic
non-Kummer extension with $l$ a prime other than the characteristic,
if $k_w$ denotes the extension of constants adjoining all the relevant
roots of unity, then we find explicitly ${\mc D}$ in the ring of integers
of $k_w$ such that $K_w=k_w(\sqrt[l^n]{{\mc D}})$.
Then we generalize the technique to a general finite 
abelian $l$-extension.
Our main result is Theorem \ref{T4.2}.

Theorem \ref{T5.2} gives the general description of $\g K$ for a
finite abelian extension $K/k$.
One crucial result that allows us to be able to give explicitly this
general description is that if $K_1/k$ 
and $K_2/k$ are two finite abelian extensions
of relatively prime degrees, we have $\g{(K_1)}\g{(K_2)}=
\g{(K_1K_2)}$.

\section{Antecedents and general notations}\label{S2}

Let $k=\F(T)$ be a global rational function field,
where $\F$ is the finite field of $q$ elements,
$R_T=\F[T]$ denotes the polynomial ring that may be
considered as the ring of integers of $k$. Let $R_T^+$ be the
set of the monic irreducible elements of $R_T$ or, equivalently,
the ``{\em finite}'' primes of $k$. 

For the Carlitz--Hayes theory 
of cyclotomic function fields, we will be using
\cite[Ch. 12]{Vil2006} and \cite[Cap. 9]{RzeVil2017}.
For $N\in R_T$, $\cicl N{}$ denotes
the $N$--th cyclotomic function field where $\Lambda_N$ is
the $N$--th torsion of the Carlitz module. For $D\in R_T$ we
denote $\*D:=(-1)^{\deg D} D$.

The results on genus fields of function fields can all be found
in \cite{BaMoReRzVi2018,MaRzVi2013,MaRzVi2017}
and \cite[Cap. 14]{RzeVil2017}. For the explicit description
of genus fields, we refer to \cite{BaMoReRzVi2018, MoReVi2019,
RzeVil2021}.

We denote the infinite prime of $k$ by $\p$. That is, $\p$ is the pole
divisor of $T$ and $1/T$ is a uniformizer for $\p$. The ramification 
index of $\p$ in $\cicl N{}/k$ is equal to
$q-1$ and the inertia degree of $\p$
in every cyclotomic function field is always equal to $1$.

For any extension $L/K$ with $L/k$ a finite abelian extension,
$e_{\infty}(L/K)$ denotes the ramification index of the infinite
primes of $K$, that is, the primes of $K$ dividing $\p$ and $f_{
\infty}(L/K)$ denotes the inertia degree of the infinite primes.
Similarly $e_P$ and $f_P$ for a finite prime $P$ of $k$.

Let $F$ be any cyclotomic function field, that is, $F\subseteq
\cicl N{}$ for some $N\in R_T$. Then $\g F=M^+ F$, where
$M$ is the maximal cyclotomic extension of $F$ unramified at the finite
primes, and $M^+$ denotes the ``{\em real subfield}''
of $M$, that is, the decompositon field of $\p$.
We denote $\ge F=M$ the extended genus field of $F/k$. Then
$\g F=\ge F^+ F$.
We have $\g F\subseteq \ge F\subseteq \cicl N{}$ and
$\ge F/\g F$ is totally ramified at the
infinite primes. We have that
$[\ge F:\g F]=e_{\infty}(\ge F/\g F)=e_{\infty}(\ge F/ F)$. Therefore, to
obtain $\g F$ we need to compute a suitable subextension of $\ge F$ of
degree $e_{\infty}(\ge F/F)$.

When $K/k$ is a finite abelian extension, it follows from
the Kronecker--Weber Theorem that 
there exist $N\in R_T$, $n'\in{\ma N}\cup\{0\}$ and $m'\in{\ma N}$
such that $K\subseteq {_{n'}\cicl N{}_{m'}}$, where for any
field $F$ containing $k$, $F_{m'}:=F
{\ma F}_{q^{m'}}$ is the extension of
constants, and $_{n'}F:=FL_{n'}$, with $L_{n'}$ 
the maximal subfield of $\cicl {1/T^{n'}}{}$,
where $\p$ is totally and wildly ramified.
Then we define 
\begin{gather}\label{Ec1}
E:=K {\mc M}\cap \cicl N{},
\end{gather}
where ${\mc M}=L_{n'}k_{m'}$. We have that
$\g K=\g E^H K$, where $H$ is the decomposition group of
the infinite primes in $\g E K/K$ and of $EK/K$ 
(see \cite{BaMoReRzVi2018}).

We only consider geometric extensions $K/k$,
that is, $\F$ is the field of constants of $K$.

For $v\in{\ma N}$, $C_v$ will denote the cyclic group of $v$ elements and
$\z n$ will denote a primitive $l^n$-th root of unity in a finite field. We will
use the notation $\con_{F/E}$ for the conorm map from $F$ to $E$.

\section{Extensions of $k$ of prime power degree}\label{S3}

In this section we consider, for a prime number $l$ a finite abelian
$l$-extension $K/k$ of exponent $l^n$, $n\in {\ma N}$.

\subsection{Case $l=p$}\label{S3.1}

In \cite[Corollary 6.6]{BaMoReRzVi2018} we found the
explicit genus field $\g K$ of a finite abelian $p$-extension
$K/k$. Let $\Witt +$,
$\Witt -$ and $\Witt \cdot$ be the Witt operations.
Let $P_1,\ldots,P_r\in R_T^+$ be the finite primes in $k$
ramified in $K$. Given a Witt vector $\vec \xi_0$,
we may decompose $\vec\xi_0$ as
\begin{gather}\label{Eq3.0}
\vec \xi_0={\vec\delta}_1
\Witt + \cdots \Witt + {\vec\delta}_r \Witt + \vec\gamma,
\end{gather}
where $\delta_{i,j}=\frac{Q_{i,j}}{P_i^{e_{i,j}}}$, $e_{i,j}\geq 0$, $Q_{i,j}\in R_T$
and if $e_{i,j}>0$, then $e_{i,j}=
\lambda_{i,j}p^{m_{i,j}}$, $\gcd (\lambda_{i,j},p)=1$,
$0\leq m_{i,j}< n$, $\gcd(Q_{i,j},P_i)=1$ and
$\deg (Q_{i,j})<\deg (P_i^{e_{i,j}})$, and $\gamma_j=f_j(T)\in R_T$ with
$\deg f_j=\nu_j p^{m_j}$ and $\gcd(q,\nu_j)=1$, $0\leq m_j<n$
when $f_j\not\in k_0$.

\begin{theorem}\label{T3.1} Let $K/k$ be a finite 
abelian $p$--extension with Galois group $\Gal(K/k)=G
\cong G_1\times\cdots\times G_s$ with $G_i\cong
C_{p^{\alpha_i}}$, $1\leq i\leq s$. Let $K$
be the composite $K=K_1\cdots K_s$ such that
$\Gal(K_i/k)\cong G_i$. Let $P_1,\ldots, P_r$
be the finite primes ramified in $K/k$.
Let $K_i=k(\vec w_i)$ be given
by the equation
\begin{gather*}
\vec w_i^p\Witt - \vec w_i=\vec \xi_i,\quad 1\leq i\leq s.
\intertext{Write each $\vec\xi_i$ as in {\rm{(\ref{Eq3.0})}} that is,}
\vec \xi_i={\vec\delta}_{i,1}
\Witt + \cdots \Witt + {\vec\delta}_{i,r} \Witt + \vec\gamma_i,
\intertext{such that all the components of $\vec\delta_{i,j}$ are written
so that the degree of the numerator is less than the degree
of the denominator, the support of
the denominator is at most $\{P_j\}$ and the components
of $\vec\gamma_i$ are polynomials. Let}
\vec w_{i,j}^p\Witt -\vec w_{i,j}=\vec\delta_{i,j},\quad
1\leq i\leq s,\quad 1\leq j\leq r
\intertext{and}
\vec z_i^p\Witt -\vec z_i=\vec \gamma_i, \quad 1\leq i\leq s.
\intertext{Then, the genus field $\g K$ of $K$ is given by}
\g K=k\big(\vec w_{i,j},\vec z_i\mid 1\leq i\leq s, 1\leq j\leq r\big).
\tag*{$\fin$}
\end{gather*}
\end{theorem}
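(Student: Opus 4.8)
The plan is to reduce the computation of $\g K$ to the two facts recalled in Section~\ref{S2}: that the genus field of a composite of abelian extensions of coprime degree is the composite of the genus fields, and that for a single cyclic factor the genus field is obtained by ``splitting'' the defining Artin--Schreier--Witt datum along its ramified primes. Since $K=K_1\cdots K_s$ with the $K_i/k$ all $p$-extensions, the degrees need not be coprime, so one cannot simply invoke the composite formula; instead the argument must be carried out directly, and the composite formula is used only at the very end to assemble the pieces coming from different primes. The overall strategy is: (i) describe the genus field of each single $K_i=k(\vec w_i)$; (ii) show that splitting $\vec\xi_i$ as in (\ref{Eq3.0}) exhibits $\g{(K_i)}$ as the composite $k(\vec w_{i,j},\vec z_i\mid j)$; (iii) take the composite over $i$ and identify it with the right-hand side.

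First I would treat the case $s=1$, i.e. $K=K_1=k(\vec w)$ with $\vec w^p\Witt-\vec w=\vec\xi$ and $\vec\xi=\vec\delta_1\Witt+\cdots\Witt+\vec\delta_r\Witt+\vec\gamma$. Additivity of the Witt--Artin--Schreier operator gives $K\subseteq k(\vec w_1)\cdots k(\vec w_r)k(\vec z)=:L$, where $\vec w_j^p\Witt-\vec w_j=\vec\delta_j$ and $\vec z^p\Witt-\vec z=\vec\gamma$. The point is that $L$ is abelian over $k$, is unramified outside $\{P_1,\dots,P_r,\p\}$ with $P_j$ ramifying only in the one factor $k(\vec w_j)$ (by the support condition on the denominators, together with the standard conductor computation for Witt vectors as in \cite[Ch.~12]{Vil2006}), and $\p$ ramifies only in $k(\vec z)$ (since the $\vec\delta_j$ have numerator degree below denominator degree, hence are ``finite'' at $\p$, while the polynomial part $\vec\gamma$ governs the behaviour at $\p$). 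Consequently each prime ramifying in $L/k$ ramifies in exactly one of the cyclic pieces $k(\vec w_j)$ or $k(\vec z)$; by the description of genus fields via characters (Leopoldt's method, as developed in \cite{BaMoReRzVi2018,MaRzVi2013,MaRzVi2017}), this is precisely the criterion for $L$ to be its own genus field over $k$, i.e. $\g L=L$. Since $K\subseteq L$ and $L/K$ is unramified at all finite primes and at $\p$ (all the ramification of $L$ is already ``accounted for'' inside $K$ by construction — each ramified prime has the same ramification index and same higher ramification data in $L/k$ as in $K/k$ at that prime), we get $L\subseteq\g K$. The reverse inclusion $\g K\subseteq L$ follows because the characters of $\g K$ are exactly those characters of $K$ whose conductor is supported at a single prime, and these are generated by the characters cutting out the $k(\vec w_j)$ and $k(\vec z)$. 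Hence $\g{(K_1)}=k(\vec w_{1,j},\vec z_1\mid 1\le j\le r)$.

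For general $s$ I would run the same character-theoretic bookkeeping simultaneously: $\g K$ is the fixed field corresponding to the subgroup of $\widehat{G}$ generated by all characters of prime-power conductor. Every character of $G=\Gal(K/k)$ is, via the Witt pairing, a combination of the characters attached to the $\vec\xi_i$; decomposing each $\vec\xi_i$ by (\ref{Eq3.0}) expresses the full character group as generated by the characters of the $\vec\delta_{i,j}$ (conductor supported at $P_j$) and the $\vec\gamma_i$ (conductor supported at $\p$). The subgroup generated by the prime-conductor characters among these is exactly the one generated by all the $\vec\delta_{i,j}$ and all the $\vec\gamma_i$, whose fixed field is $k(\vec w_{i,j},\vec z_i\mid 1\le i\le s,\,1\le j\le r)$. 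One must check that the decomposition (\ref{Eq3.0}) can be arranged with the stated normalizations (numerator degrees, support of denominators, $m_{i,j}<n$, the coprimality conditions) — this is a Witt-vector partial-fractions argument, and it is where most of the technical care goes.

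The main obstacle I anticipate is the ramification analysis for Witt vectors: one must verify that after the normalization (\ref{Eq3.0}), the conductor exponent of $K/k$ at each $P_j$ equals the conductor exponent of $k(\vec w_{\bullet,j})/k$ at $P_j$ (so that $L/K$ is genuinely unramified there), and likewise at $\p$; this requires the precise formula for the different of an Artin--Schreier--Witt extension in terms of the $p$-adic valuations of the Witt components, handling the possibility that lower Witt components contribute to the conductor of higher ones. Controlling the behaviour at $\p$ is the subtler half, since $\p$ is wildly ramified and the polynomial parts $\vec\gamma_i$ interact; here the hypothesis that we are in the pure $p$-extension case (so the infinite prime is wild but there are no roots of unity to track) keeps things manageable, and the normalization $0\le m_j<n$ on $\deg f_j$ together with \cite[Ch.~12]{Vil2006} pins down $e_\infty$. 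Once these local equalities are in hand, the global statement $\g K=k(\vec w_{i,j},\vec z_i\mid i,j)$ follows formally from the character description of the genus field.
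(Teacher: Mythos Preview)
The paper does not actually prove Theorem~\ref{T3.1}: it is quoted from \cite[Corollary~6.6]{BaMoReRzVi2018} and closed immediately with the $\fin$ symbol. So there is no argument in this paper to compare your proposal against; you are in effect reconstructing what the cited reference does, and your outline --- Artin--Schreier--Witt additivity followed by a conductor/character analysis in the style of Leopoldt --- is indeed the approach taken there. You also correctly flag the Witt-vector conductor computation as the technical crux.

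There is one genuine gap in your sketch. To conclude $L\subseteq\g K$ it is \emph{not} enough that $L/K$ be unramified at $\p$: by the definition of $K_H$ recalled in Section~\ref{S2}, the places above $\p$ must \emph{split completely} in $\g K/K$. This does hold, but for a stronger reason than you state. The normalization in (\ref{Eq3.0}) --- degree of numerator strictly less than degree of denominator --- forces each component of $\vec\delta_{i,j}$ to lie in the maximal ideal at $\p$, so modulo $\p$ the Witt equation becomes $\vec w^{\,p}\Witt-\vec w=\vec 0$, all of whose solutions already lie in $W(\F_p)$. Hence $\p$ splits completely in every $k(\vec w_{i,j})$, and therefore the completions satisfy $K_{\p}=k(\vec z_i\mid i)_{\p}=L_{\p}$. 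With this strengthening your inclusion $L\subseteq\g K$ is justified. Your phrasing of the reverse inclusion is also slightly off: the characters cutting out $\g K$ are not ``those characters of $K$ with single-prime conductor'' but rather the group \emph{generated by} the local components $\chi_P$ of the characters of $K$; once stated this way the identification with the $k(\vec w_{i,j})$, $k(\vec z_i)$ is immediate.
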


\subsection{Case $l\neq p$, $K/k$ Kummer}\label{E3.2}

We now consider the case of a finite abelian Kummer $l$-extension
$K/k$ of exponent $l^n$. That is $l^n|q-1$ or, equivalently,
$\z n\in \F$.

Let $K=k\big(\sqrt[l^{n_1}]{\gamma_1 D_1},
\cdots, \sqrt[l^{n_s}]{\gamma_s D_s}\big)=K_1\cdots K_s$, where
$K_{\varepsilon}=k\big(\sqrt[l^{n_{\varepsilon}}]{\gamma_{
\varepsilon} D_{\varepsilon}}\big)$, $D_{\varepsilon}\in R_T$ monic,
$\gamma_{\varepsilon}\in \*{\F}$, $1\leq \varepsilon\leq s$ and
$n=n_1\geq \ldots\geq n_s$.
Let $P_1,\ldots,P_r$ be the finite primes ramified in $K/k$
and let
\[
D_{\varepsilon}=\polyt {\varepsilon}\quad\text{with}\quad 0\leq
\alpha_{j,\varepsilon}\leq l^{n_{\varepsilon}}-1,\quad 1\leq j\leq r,
\quad 1\leq \varepsilon\leq s,
\]
where $\alpha_{j,\varepsilon}=b_{j,\varepsilon}' l^{a'_{j,\varepsilon}}$
with $\gcd(l,b'_{j,\varepsilon})
=1$ and $\deg P_{j}=c'_{j}l^{d'_{j}}$ with $\gcd(l,c'_{j})=1$, 
$1\leq j\leq r$. 
We have, for $P_j$, that
\begin{gather}
e_{P_j}(K/k)=\lcm_{1\leq\varepsilon\leq s}
[e_{P_j}(K_{\varepsilon}/k)]=l^{\beta_j}\nonumber
\intertext{with}
\beta_j=\max_{1\leq \varepsilon\leq s}\{n_{\varepsilon}-
v_l(\alpha_{j,\varepsilon})\}=\max_{\substack{1\leq \varepsilon\leq s\\
b'_{j,\varepsilon}\neq 0}} \{n_{\varepsilon}-a'_{j,\varepsilon}\}.\label{Ec3.5}
\intertext{Also, we have}
\begin{align}
l^{t'}:&=e_{\infty}(K/k)=\lcm_{1\leq \varepsilon\leq s}
[e_{\infty}(K_{\varepsilon}/k)]=
\lcm_{1\leq \varepsilon\leq s}\Big[\frac{l^{n_{\varepsilon}}}{\gcd(
l^{n_{\varepsilon}},\deg D_{\varepsilon})}\Big]\nonumber\\
&=\lcm_{1\leq \varepsilon\leq s}\big[l^{n_{\varepsilon}-\min\{
n_{\varepsilon},v_l(\deg D_{\varepsilon})\}}\big],\label{Ec3.6}
\end{align}
\end{gather}
that is, $t'=\max\limits_{1\leq\varepsilon\leq s}\big\{n_{\varepsilon}-
\min\{n_{\varepsilon},v_l(\deg D_{\varepsilon})\}\big\}$.

\begin{theorem}[Kummer case]\label{T3.2} 
Let $K/k$ be a finite Kummer $l$--extension 
of $k$. Let
$\Gal(K/k)\cong C_{l^{n_1}}\times \cdots \times C_{l^{n_s}}$
with $n=n_1\geq n_2\geq \cdots \geq n_s$ and $l^n|q-1$.
We have $K=K_1\cdots K_s$,
$K_{\varepsilon}=\Ku{l^{n_{\varepsilon}}}{\gamma_{
\varepsilon}D_{\varepsilon}}$,
$D_{\varepsilon}\in R_T$ monic and $\gamma_{\varepsilon}
\in \f$, $1\leq \varepsilon\leq s$. 
Let $P_1,\ldots,P_r$ be the finite primes in $k$ ramified in
$K$ with $P_1,\ldots,P_r\in R_T^+$ distinct. Let 
\begin{gather*}
e_{P_j}(K/k)=l^{\beta_j},\quad
1\leq \beta_j\leq n, \quad 1\leq j\leq r,\quad\text{and}\quad
e_{\infty}(K/k)=l^{t'}, \quad 0\leq t'\leq n
\end{gather*}
given by {\rm{(\ref{Ec3.5})}} and {\rm{(\ref{Ec3.6})}} and let
$\deg P_j=c'_jl^{d'_j}$ with $\gcd(c'_j,l)=1$, $1\leq j\leq r$.

We order $P_1,\ldots, P_r$ so that $n=\beta_1\geq
\beta_2\geq \ldots \geq \beta_r$.

Let $E$ be given by {\rm{(\ref{Ec1})}}. Then $E=k\Big(
\sqrt[l^{n_1}]{\*{D_1}},\ldots,\sqrt[l^{n_s}]{\*{D_s}}\Big)$.
The maximal cyclotomic extension $M$ of $E$, unramified 
at the finite primes, is
given by $M=\ge E=k\Big(\sqrt[l^{\beta_1}]{P_1^*},\ldots,
\sqrt[l^{\beta_r}]{P_r^*}\Big)$. Let $l^{m'}=e_{\infty}(M/k)$. Then
$m'=\max\limits_{1\leq j\leq r}\big\{\beta_j-\min\{\beta_j,d'_j\}\big\}$.

Choose $i$ such that $m'=\beta_i-\min\{\beta_i,d'_i\}$
and such that for $j>i$ we have $m'>\beta_j
-\min\{\beta_j,d'_j\}$. That is, $i$ is the largest
index obtaining $l^{m'}$ as the ramification index of $\p$.

In case $m'=t'=0$ we have $M=\g E=
\prod_{j=1}^r\Ku{l^{\beta_j}}{P_j^*}$ and $\g K=MK$.

In case $m'>t'\geq 0$ or $m'=t'>0$, we have $\min\{
\beta_i,d'_i\}=d'_i$ and $m'=\beta_i-d'_i$. Let $a,b\in{\ma Z}$ be such that
$a\deg P_i+b l^{n+d'_i}=l^{d'_i}=\gcd(l^{n+d'_i},\deg P_i)$. Set $z_j=-a
\frac{\deg P_j}{l^{d'_i}}=-ac'_jl^{d'_j-d'_i}\in {\ma Z}$ for $1\leq j\leq i-1$.
For $j>i$, consider $y_j\in{\ma Z}$ with $y_j\equiv -(c'_i)^{-1}c'_j\bmod l^n
\equiv -ac'_j \bmod l^n$. Let
\[
E_j=
\begin{cases}
\Ku{l^{\beta_j}}{P_jP_i^{z_j}}&\text{if $j<i$},\\
\Ku{l^{d'_i+t'-u'}}{\*{P_i}}&\text{if $j=i$},\\
\Ku{l^{\beta_j}}{P_jP_i^{y_jl^{d'_j-d'_i}}}&\text{if $j>i$ and $d'_j\geq d'_i$},\\
\Ku{l^{\beta_j+d'_i-d'_j}}{P_j^{l^{d'_i-d'_j}}{P_i^{y_j}}}&\text{if $j>i$ and $d'_i> d'_j$}.
\end{cases}
\]

Then $\g K=E_1\cdots E_{i-1}E_iE_{i+1}\cdots E_rK$,
where $l^{u'}=\frac{[\F(\sqrt[l^{n_1}]{\varepsilon_1},\ldots, 
\sqrt[l^{n_s}]{\varepsilon_s}):\F]}{\deg_K (\p)}$
and $\varepsilon_j =(-1)^{\deg D_j}\gamma_j$, $1\leq j\leq s$.
\end{theorem}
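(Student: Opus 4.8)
\emph{Overview and the field $E$.} The plan is to reduce the whole statement to explicit Kummer theory over $k$ and over the completion $k_{\p}$, in four stages: identify $E$; identify $M=\ge E$ and compute $m'$; descend from $M$ to $\g E$ through the infinite prime; and pass from $\g E$ to $\g K$. For the first stage, since $\*{D_{\varepsilon}}=(-1)^{\deg D_{\varepsilon}}D_{\varepsilon}=\prod_{j=1}^{r}(\*{P_j})^{\alpha_{j,\varepsilon}}$ and $l^{n_{\varepsilon}}\mid q-1\mid q^{\deg P_j}-1$, each $\*{P_j}$ has an $l^{n_{\varepsilon}}$-th root inside the degree-$l^{n_{\varepsilon}}$ subfield of $\cicl{P_j}{}$, so $k\big(\sqrt[l^{n_1}]{\*{D_1}},\dots,\sqrt[l^{n_s}]{\*{D_s}}\big)\subseteq\cicl N{}$ for a suitable $N$; on the other hand $\sqrt[l^{n_{\varepsilon}}]{\gamma_{\varepsilon}D_{\varepsilon}}$ and $\sqrt[l^{n_{\varepsilon}}]{\*{D_{\varepsilon}}}$ differ by the $l^{n_{\varepsilon}}$-th root of the constant $(-1)^{\deg D_{\varepsilon}}\gamma_{\varepsilon}\in\f$, which lies in the constant subfield $k_{m'}$ of $\mc M=L_{n'}k_{m'}$, so $K\mc M=E'\mc M$ with $E':=k\big(\sqrt[l^{n_{\varepsilon}}]{\*{D_{\varepsilon}}}\mid 1\le\varepsilon\le s\big)$. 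Since $L_{n'}/k$ is a $p$-extension totally ramified at $\p$ and $k_{m'}/k$ is constant, while $\cicl N{}/k$ is geometric and tame at $\p$, we get $\mc M\cap\cicl N{}=k$, so $E'$ is the largest subfield of $\cicl N{}$ with $E'\mc M=K\mc M$, and $E=E'$ by (\ref{Ec1}).

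\emph{The field $\ge E$ and the integer $m'$.} From $\*{D_{\varepsilon}}=\prod_j(\*{P_j})^{\alpha_{j,\varepsilon}}$ with $\alpha_{j,\varepsilon}=b'_{j,\varepsilon}l^{a'_{j,\varepsilon}}$ and the inequality $l^{\beta_j}\ge l^{n_{\varepsilon}-a'_{j,\varepsilon}}$ coming from (\ref{Ec3.5}), one checks $\sqrt[l^{n_{\varepsilon}}]{\*{D_{\varepsilon}}}\in M':=k\big(\sqrt[l^{\beta_j}]{\*{P_j}}\mid 1\le j\le r\big)$, so $E\subseteq M'$. Because constants are units at every finite prime, $e_{P_j}(E/k)=e_{P_j}(K/k)=l^{\beta_j}=e_{P_j}(M'/k)$ and $M'/k$ is unramified outside $\{P_1,\dots,P_r,\p\}$, hence $M'/E$ is unramified at every finite prime. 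Conversely, any cyclotomic extension of $E$ unramified at the finite primes cannot raise an $e_{P_j}$ and, being geometric with $f_{\infty}=1$, can add neither a constant extension nor ramification at $\p$ that is incompatible with $f_{\infty}=1$; its part ramified at $P_j$ is then forced to be $\Ku{l^{\beta_j}}{\*{P_j}}$ — this is the point of the normalization $\*{P_j}=(-1)^{\deg P_j}P_j$, whose unit part at $\p$ is a local $l^{\beta_j}$-th power — so it sits inside $M'$. Therefore $\ge E=M=M'$. As the factors $\Ku{l^{\beta_j}}{\*{P_j}}$ are linearly disjoint over $k$ and $\p$ is unramified outside them, $e_{\infty}(M/k)=\lcm_j l^{\beta_j-\min\{\beta_j,d'_j\}}$, so $m'=\max_j\{\beta_j-\min\{\beta_j,d'_j\}\}$ and a largest index $i$ realising it exists.

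\emph{Descent to $\g E$; the case $m'=t'=0$ and the reduction.} One has $\g E=M^+E$ with $M^+$ the decomposition field of $\p$ in $M/k$; since $f_{\infty}=1$ in every cyclotomic function field, $M^+$ is also the inertia field, so $[M:M^+]=e_{\infty}(M/k)=l^{m'}$. If $m'=t'=0$, then $\p$ is unramified with trivial residue degree in $M/k$, hence splits completely, $M^+=M$, $\g E=M$; moreover $e_{\infty}(K/k)=l^{t'}=1$ makes $\p$ split completely in $MK/K$, so the decomposition group $H$ of $\p$ in $\g EK/K$ is trivial and $\g K=\g E^HK=MK$. Assume now $m'>t'\ge 0$ or $m'=t'>0$, so $m'>0$; then $\beta_i-\min\{\beta_i,d'_i\}=m'>0$ forces $\min\{\beta_i,d'_i\}=d'_i$ and $m'=\beta_i-d'_i$. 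Identify $\Gal(M/k)\cong\bigoplus_{j=1}^{r}C_{l^{\beta_j}}$ via $\pi_j:=\sqrt[l^{\beta_j}]{\*{P_j}}$; as $M/k$ is an $l$-extension with $l\ne p$ it is tame at $\p$, so $I_{\infty}$ is cyclic of order $l^{m'}$ and a generator $\tau$ acts by $\tau\pi_j=\zeta_{l^{\beta_j}}^{\,c\deg P_j}\pi_j$ for one unit $c\in(\ma Z/l^n)^{\ast}$ (the local symbol of $1/T$), because $k_{\p}(\pi_j)/k_{\p}$ is totally ramified and $\*{P_j}=(-1)^{\deg P_j}P_j$ places its unit part at $\p$ in $(\F^{\ast})^{l^{\beta_j}}$. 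Hence $M^+=M^{I_{\infty}}$, to be generated from the $\tau$-invariant monomials in the $\pi_j$.

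\emph{From $\g E$ to $\g K$; the main obstacle.} The Bézout identity $a\deg P_i+bl^{n+d'_i}=l^{d'_i}$ gives $ac'_i\equiv 1\pmod{l^n}$, so for $j<i$ one has $\deg P_j+z_j\deg P_i=c'_jl^{d'_j}(1-ac'_i)\equiv 0\pmod{l^{n+d'_j}}$, whence $\sqrt[l^{\beta_j}]{P_jP_i^{z_j}}$ is $\tau$-invariant; the choices $y_j\equiv -ac'_j\pmod{l^n}$ together with the valuation corrections by $l^{d'_j-d'_i}$ or $l^{d'_i-d'_j}$ (according as $d'_j\ge d'_i$ or $d'_i>d'_j$) do the same for $j>i$, while the residual $P_i$-direction of $M^+$ is $\Ku{l^{\beta_i-m'}}{\*{P_i}}=\Ku{l^{d'_i}}{\*{P_i}}$. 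Finally $\g K=\g E^HK$, and composing with $K$ and taking $H$-invariants alters only the $P_i$-direction — the other $E_j$ being already invariant — lowering the exponent $d'_i$ to $d'_i+t'-u'$, where $\deg_K\p=f_{\infty}(K/k)$ and $l^{u'}=[\F(\sqrt[l^{n_1}]{\varepsilon_1},\dots,\sqrt[l^{n_s}]{\varepsilon_s}):\F]/\deg_K\p$, $\varepsilon_j=(-1)^{\deg D_j}\gamma_j$, record the constant contribution of $\p$ in $K$ relative to $E$; this yields $\g K=E_1\cdots E_{i-1}E_iE_{i+1}\cdots E_rK$ with the listed $E_j$. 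The main obstacle is precisely this last stage: carrying out the Kummer theory at $\p$ to pin down $M^{I_{\infty}}$ exactly, choosing $z_j,y_j$ so that $P_jP_i^{z_j}$ and its two $j>i$ variants are exactly — not merely up to a proper subgroup — $\tau$-invariant, and then bookkeeping simultaneously the descent $\g E\to\g E^H$ and the constant-field discrepancy between $E$ and $K$ (the term $u'$) on the single affected $P_i$-exponent, all while tracking the unit-at-$\p$ hidden in $\*{P_j}=(-1)^{\deg P_j}P_j$ (and the nuisance of $l=2$, where that sign need not be an $l$-th power).
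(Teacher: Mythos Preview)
The paper does not prove Theorem~\ref{T3.2} at all; it simply cites \cite[Theorems 3.4 and 3.6]{RzeVil2021}. Your outline follows what is surely the same strategy as that reference---identify $E$, then $M=\ge E$, then descend through the inertia at $\p$ to $\g E$, and finally pass to $\g K$ via the decomposition group $H$---and your first three stages are sketched correctly, including the B\'ezout computation showing that the $E_j$ for $j\neq i$ lie in $M^{I_\infty}$.

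Where your proposal falls short is the fourth stage, and you essentially say so yourself. Two things are missing. First, you verify that each $E_j$ ($j\neq i$) is $I_\infty$-invariant, but you never check the converse direction: that the compositum $\prod_{j\neq i}E_j\cdot k\big(\sqrt[l^{d'_i}]{\*{P_i}}\big)$ has index exactly $l^{m'}$ in $M$, i.e.\ that it equals $M^+$ rather than a proper subfield. This is an index count in the Kummer lattice $\bigoplus_j C_{l^{\beta_j}}$ and has to be done. Second, the sentence ``composing with $K$ and taking $H$-invariants alters only the $P_i$-direction\dots lowering the exponent $d'_i$ to $d'_i+t'-u'$'' is an assertion, not an argument. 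You need to (a) identify $H\subseteq\Gal(\g E K/K)$ with a subgroup of $\Gal(\g E/k)$ via restriction, (b) show that $H$ acts trivially on each $E_j$ for $j\neq i$---which is not automatic, since those fields involve $P_i$---and (c) actually compute $|H|$ to extract the formula for $u'$ in terms of the constant field ${\ma F}_q(\sqrt[l^{n_\varepsilon}]{\varepsilon_\varepsilon})$ and $\deg_K\p$. Note also that $d'_i+t'-u'$ can exceed $d'_i$ (when $t'>u'$), so ``lowering'' is the wrong word; the exponent moves in either direction depending on how much ramification $K$ already carries at $\p$ versus how much inertia $H$ kills.

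A smaller imprecision: in the second stage, your argument that any cyclotomic extension of $E$ unramified at the finite primes sits inside $M'$ invokes the normalization $\*{P_j}$ and its behaviour at $\p$, but what is actually needed is the structural fact that the unique subfield of $\cicl{P_j}{}$ of degree $l^{\beta_j}$ over $k$ is $k\big(\sqrt[l^{\beta_j}]{\*{P_j}}\big)$; the $\p$-behaviour is irrelevant there.
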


\begin{proof}
See \cite[Theorems 3.4 and 3.6]{RzeVil2021}.
\end{proof}

\subsection{Case $l\neq p$, $K/k$ non-Kummer}\label{S3.3}

In this case, we have that the extension $K/k$ is a finite abelian $l$-extension
of exponent $l^n$ such that $l^n\nmid q-1$. This case is treated in the
following section.

\section{The non-Kummer case}

Now we consider a finite abelian non-Kummer $l$-extension $K/k$ of 
exponent $l^n$. Therefore $l^n\nmid q-1$ and $\z n\notin \F$.
The non-explicit description of $\g K$ is given in
\cite[Theorem 2.2]{BaMoReRzVi2018}. Now, the description of
the subfields of a
cyclotomic function field $\cicl N{}$ is not explicit except in very few
cases. That is, if $F=k(\delta)$, it is hard to describe $\delta$ in terms
of roots of polynomial equations.
Our objective is to give explicitly the field $\gv K$, where
$w:=[\F(\z n):\F]|l^{n-1}(l-1)$. We have that 
$\v K/\v k$ and $\gv K/\v k$ are Kummer
extensions and therefore we may use Theorem \ref{T3.2} to give
$\gv K$ explicitly.

First, we recall the following non-explicit result.

\begin{proposition}\label{P3.3}
Let $F/k$ be a cyclic non-Kummer extension of prime degree $l$. 
Let $\xi\in {\mc O}_F$, the
integral closure of $R_T$ in $F$, such that $F=k(\xi)$ and
\[
\chi=\sum_{i=0}^{l-1}\zeta_l^i \varphi^i(\xi)\neq 0,
\]
where $\Gal(F/k)=\langle \varphi\rangle$. Then $\mu=\chi^l\in {\ma F}_{q^w}[T]$
and $F_w=k_w(\sqrt[l]{\mu})=k_w(\chi)$, with $w=[\F(\z n):\F]$.

\end{proposition}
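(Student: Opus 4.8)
The plan is to carry out the classical Lagrange resolvent computation, transported to the function field setting over the constant extension $k_w$, and then to use integral closedness to see that the resolvent power is actually a polynomial rather than merely a rational function.

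First I would pass to $k_w=\F_{q^w}(T)$. Since $\zeta_l\in\F(\z n)=\F_{q^w}$ (a suitable $\zeta_l$ is a power of $\z n$), the element $\chi$ lives in $F_w:=Fk_w=F\F_{q^w}$. Because the field of constants of $F$ is $\F$, we have $F\cap k_w=k$; as $F/k$ is moreover cyclic, the restriction map $\Gal(F_w/k_w)\to\Gal(F/k)$ is an isomorphism and $[F_w:k_w]=l$, so $F_w/k_w$ is a cyclic Kummer extension of degree $l$. I extend $\varphi$ to the generator of $\Gal(F_w/k_w)$ that fixes the constants, still writing $\varphi$ for it; its fixed field in $F_w$ is exactly $k_w$.

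The core computation is the identity $\varphi(\chi)=\zeta_l^{-1}\chi$: applying $\varphi$ to $\chi=\sum_{i=0}^{l-1}\zeta_l^i\varphi^i(\xi)$ termwise and reindexing by $j=i+1$, the putative $j=l$ summand collapses onto the $j=0$ summand via $\varphi^l=\Id$ and $\zeta_l^l=1$, so the reindexed sum is again $\chi$ up to the scalar $\zeta_l^{-1}$. Hence $\varphi(\mu)=\varphi(\chi)^l=\zeta_l^{-l}\chi^l=\mu$, so $\mu\in F_w^{\langle\varphi\rangle}=k_w=\F_{q^w}(T)$. To upgrade this to $\mu\in\F_{q^w}[T]$, note that each conjugate $\varphi^i(\xi)$ lies in ${\mc O}_F$ (the integral closure of $R_T$ in $F$ is $\Gal(F/k)$-stable) and $\zeta_l$ is integral over $\F\subseteq R_T$; therefore $\chi$, being a sum of products of elements integral over $R_T$, is integral over $R_T$, and so is $\mu=\chi^l$. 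Since $R_T=\F[T]\subseteq\F_{q^w}[T]$ and $\F_{q^w}[T]$ is a PID, hence integrally closed in $\F_{q^w}(T)$, it follows that $\mu\in\F_{q^w}[T]$.

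For the equality $F_w=k_w(\sqrt[l]{\mu})=k_w(\chi)$: since $\chi\neq 0$ and $\zeta_l\neq 1$, the relation $\varphi(\chi)=\zeta_l^{-1}\chi$ shows $\varphi(\chi)\neq\chi$, hence $\chi\notin k_w$; as $k_w\subsetneq k_w(\chi)\subseteq F_w$ and $[F_w:k_w]=l$ is prime, we get $k_w(\chi)=F_w$. Finally any two $l$-th roots of $\mu$ differ by a power of $\zeta_l\in k_w$, so $k_w(\sqrt[l]{\mu})=k_w(\chi)$ independently of the chosen root. The only real obstacle here is bookkeeping rather than an idea: one must check carefully that the transfer of $F/k$ to a degree-$l$ Kummer extension of $k_w$ is legitimate (this is where geometricity of $F/k$ and $\zeta_l\in k_w$ enter) and that the integrality step genuinely removes denominators; the resolvent identity itself is routine.
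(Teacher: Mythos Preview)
Your argument is correct: the Lagrange resolvent identity $\varphi(\chi)=\zeta_l^{-1}\chi$ is derived cleanly, the Galois-invariance of $\mu=\chi^l$ lands it in $k_w$, and the integrality argument (using that ${\mc O}_F$ is $\Gal(F/k)$-stable and that $\F_{q^w}[T]$ is the integral closure of $R_T$ in $k_w$) legitimately forces $\mu\in\F_{q^w}[T]$. The final degree count using the primality of $l$ is also fine.

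As for comparison with the paper: the paper does not actually prove this proposition --- it simply refers the reader to \cite[Proposition~4.1]{Wit2007}. Your write-up is essentially the standard resolvent computation one would expect to find there, so you have supplied what the paper outsources. One small remark: in the context of this proposition the exponent is $l^1$, so the $w$ in the statement is $[\F(\zeta_l):\F]$; you implicitly use this when you say $\zeta_l\in\F_{q^w}$, and it is worth saying so explicitly since the paper's notation $\z n$ for general $n$ could momentarily confuse a reader here.
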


\begin{proof}
See \cite[Proposition 4.1]{Wit2007}.
\end{proof}

Our first goal is to give in Theorem \ref{T4.2} an
explicit generalization of Proposition \ref{P3.3}.

As a first step, we consider the case of only one finite prime ramified.
Let $P\in R_T^+$ and consider a cyclic extension $K/k$ of degree $l^n$ with
$l^n\nmid q-1$ and such that $P$ is the
only finite prime of $k$ ramified in $K$ and it is fully ramified.
Let $w=[\F(\z n):\F]|l^{n-1}(l-1)$.
Let $d_P=\deg_k P$. Then $l^n|q^{d_P}-1$. We have that $w=\ord (q\bmod l^n)$.
Hence $w|d_P$. In the extension of constants $k_w/k$ we have that
$\con_{k/k_w}P={\mc P}_1\cdots {\mc P}_h$ where $h=\gcd(d_P,w)=w$.
That is, $P$ decomposes fully in $k_w$, and $\deg_{k_w}{\mc P}_i=d_P/w$,
$1\leq i\leq w$ (see \cite[Theorem 6.2.1]{Vil2006}).

Then $K_w/k_w$ is a Kummer extension of degree $l^n$ and the finite
primes ramified are precisely ${\mc P}_i$, $1\leq i\leq w$.
All of them are fully ramified.
Therefore there exist $\alpha_i$ such that $1\leq \alpha_i\leq l^n-1$ and
$\gcd (l,\alpha_i)=1$, $1\leq i\leq w$, with 
\[
K_w=k_w\Big(\sqrt[l^n]{\gamma\polyn w}\Big)=
k_w\big(\sqrt[l^n]{\gamma {\mc D}}\big)
=k_w(\delta)
\]
for some $\gamma\in\*{{\ma F}_{q^w}}$, ${\mc D}=\polyn w\in
{\ma F}_{q^w}[T]$ and $\delta=\sqrt[l^n]{\gamma D}$.
\[
\xymatrix{
K\ar@{-}[rr]^{\langle\tau\rangle}_w\ar@{-}[d]_{\langle\sigma\rangle}^{l^n}
&&K_w\ar@{-}[d]_{l^n}^{\langle\sigma\rangle}\\
k\ar@{-}[rr]_{\langle\tau\rangle}^w&&k_w
}
\]

Let $\Gal(K_w/K)\cong \Gal(k_w/k)=\langle \tau\rangle\cong C_w$,
and $\Gal(K_w/k_w)\cong\Gal(K/k)=\langle
\sigma\rangle\cong C_{l^n}$. Then $\ord(\tau)=w$
and we may assume that $\sigma(\delta)
=\z n \delta$. Let $\tau(\z n)=\z n^{\eta}$ where ${\eta}$ is
relatively prime to $l$ and $\ord({\eta}\bmod l^n)=w$. Since $\Gal(K_w/k)$ is an
abelian extension, $\Gal(K_w,k)=\langle\sigma,\tau\rangle$ and $\sigma\tau=
\tau\sigma$. Therefore
\[
\tau(\sigma(\delta))=\tau(\z n \delta)=\z n^{\eta} \epsilon=
\sigma(\tau(\delta))=\sigma(\epsilon),
\]
where $\epsilon:=\tau(\delta)$. Therefore we have that $\sigma(\epsilon)
=\z n^{\eta} \epsilon$. It follows that $\sigma(\delta^{-{\eta}}\epsilon)=
(\z n \delta)^{-{\eta}}\z n^{\eta} \epsilon=\delta^{-{\eta}}\epsilon$. Thus
$\delta^{-{\eta}}\epsilon\in k_w$ and $\epsilon=\lambda\delta^{\eta}$ for
some $\lambda\in k_w$.

On the other hand, since $\Gal(k_w/k)=\langle\tau\rangle$, we have
that $\tau$ acts transitively on the set $\{{\mc P}_1,\ldots, {\mc P}_w\}$.
Hence the only finite prime divisors dividing
$\epsilon^{l^n}$ are ${\mc P}_1,\ldots,{\mc P}_w$.
Without loss of generality, we may assume that
$\langle\tau\rangle$ acts as $(1,\ldots,w)$ on the set
$\{{\mc P}_1,\ldots,{\mc P}_w\}$.
That is, $\tau({\mc P}_i)={\mc P}_{i+1}$ for $i=1,2.\ldots, w-1$ and 
$\tau({\mc P}_w)={\mc P}_1$.  Thus
\begin{align}
\epsilon^{l^n}&=\tau(\delta^{l^n})=\tau(\gamma{\mc D})=\tau(\gamma\polyn w)
=\gamma^{\tau}\polynn w\nonumber\\
&=\lambda^{l^n}(\delta^{l^n})^{\eta}=\lambda^{l^n}(\gamma{\mc D})^{\eta}
=\lambda^{l^n}(\gamma \polyn w)^{\eta}.\label{Ec2}
\end{align}

It follows that, if for some finite prime divisor ${\mc P}$ we have
$v_{\mc P}(\lambda)\neq 0$, then ${\mc P}\in\{{\mc P}_1,\ldots, {\mc P}_w\}$.
Set $\xi_i:=v_{{\mc P}_i}(\lambda)$, $1\leq i\leq w$. Then, from (\ref{Ec2}) we
have
\begin{eqnarray}
\alpha_{i-1}&=&{\eta}\alpha_i+l^n\xi_i, \qquad 2\leq i\leq w,\nonumber\\
\alpha_w&=&{\eta}\alpha_1+l^n\xi_1.\label{Ec3}
\end{eqnarray}

From (\ref{Ec3}) we obtain
\begin{eqnarray*}
\alpha_{w-1}&\equiv& {\eta}\alpha_w\bmod l^n,\\
\alpha_{w-2}&\equiv& {\eta}\alpha_{w-1}\bmod l^n\equiv {\eta}^2
\alpha_w\bmod l^n,\\
\vdots&\vdots&\qquad \vdots\qquad \vdots\qquad\qquad \vdots\qquad \vdots\\
\alpha_2&\equiv&{\eta} \alpha_3\bmod l^n\equiv \ldots \equiv 
{\eta}^{w-2}\alpha_w\bmod l^n,\\
\alpha_1&\equiv&{\eta} \alpha_2\bmod l^n\equiv \ldots \equiv 
{\eta}^{w-1}\alpha_w\bmod l^n.
\end{eqnarray*}
Since $l\nmid \alpha_w$, we obtain that
\begin{align}\label{Ec4}
K_w&=k_w(\sqrt[l^n]{\gamma {\mc D}})=k_w(\delta)=
k_w\Big(\sqrt[l^n]{\gamma\big(\polynnn w\big)^{\alpha_w}}\Big)\nonumber\\
&=k_w\Big(\sqrt[l^n]{\gamma\polynnn w}\Big).
\end{align}

The extension given in (\ref{Ec4}) is determined by the class of $\gamma\in\*{
{\ma F}_{q^w}}$ modulo $({\ma F}_{q^w}^*)^{l^n}$. In particular, $K$ is cyclotomic
if and only if $(-1)^{\deg_{k_w}({\mc D})}\gamma \in ({\ma F}_{q^w}^*)^{l^n}$.

Let us obtain the ramification of the infinite primes in $K/k$. Note that since $K/k$ is an abelian
extension, if $e_{\infty}(K/k)$ denotes the ramification index of $P_{\infty}$ in
$K$, then we have $e_{\infty}(K/k)|q^{\deg P_{\infty}}-1=q-1$. In particular $P_{\infty}$ 
is not fully ramified. On the other hand, since $k_w/k$ is unramified, we obtain from
(\ref{Ec3.6}) that
\begin{gather}\label{Ec5}
e_{\infty}(K/k)=e_{\infty}(K_w/k_w)=\frac{l^n}{\gcd({\deg_{k_w}({\mc D}),l^n)}}.
\end{gather}

Now, $\deg_{k_w}({\mc D})=\sum_{i=1}^w {\eta}^{w-i}
\deg_{k_w}({\mc P}_i)=\sum_{i=1}^w
{\eta}^{w-i}d_P/w=\frac {d_P}w \frac{{\eta}^w-1}{{\eta}-1}$ 
(recall that $d_P=\deg_k(P)$), so that
\begin{gather}\label{Ec6}
e_{\infty}(K/k)=\frac{l^n}{\gcd\big(\frac {d_P}w 
\frac{{\eta}^w-1}{{\eta}-1},l^n\big)}.
\end{gather}

\subsection{General non-Kummer abelian $l$-extensions}

Now we may consider the general case. Let $K/k$ be a finite
abelian $l$-extension, where $l$ is a prime number other than the
characteristic of $k$. Let $\Gal(K/k)\cong C_{l^{n_1}}\times C_{l^{n_2}}\times
\cdots \times C_{l^{n_s}}$ with $n=n_1\geq n_2\geq\ldots\geq n_s$. Then
$\Gal(K/k)$ is of exponent $l^n$. We assume that $K/k$
is a non-Kummer extension.
However, what we will obtain, could be applied to Kummer extensions,
see Remark \ref{R4.3}.

We assume that $l^n\nmid q-1$. Let $w=[k(\z n):k]>1$. 
We have that $w=\ord(q\bmod l^n)$.
The Kummer case
is when $w=1$. We are assuming that $w\geq 2$. 

Since $K/k$ is abelian and $\deg_k(P_{\infty})=1$, $P_{\infty}$ is not fully
ramified in $K/k$.

Let $K=K_1\cdots K_{\varepsilon}\cdots K_s$ where $K_{\varepsilon}/k$ is a
cyclic extension of degree $l^{n_{\varepsilon}}$, $1\leq \varepsilon\leq s$.
Let $K_{\varepsilon,w}=K_{\varepsilon}k_w=k_w\big(\sqrt[l^{n_{\epsilon}}]{
\gamma_{\varepsilon}{\mc D}_{\varepsilon}}\big)$, with $\gamma_{\varepsilon}
\in {\ma F}_{q^w}^*$ and ${\mc D}_{\varepsilon}\in {\ma F}_{q^w}[T]$,
$1\leq \varepsilon\leq s$.
Let $P_1,\ldots,P_r\in R_T^+$ be the finite primes ramified in $K/k$ and
$\deg_k P_j=c'_jl^{d'_j}$ with $l\nmid c'_j$. 

Let $\con_{k/k_w}P_j
=\polynr j$ where $s_j=\gcd(\deg_k P_j,w)$, $1\leq j\leq r$.
Then $\deg_{k_w}({\mc P}_{j,v})=
\frac{\deg_k P_j}{s_j}=\frac{c'_jl^{d'_j}}{s_j}$ for all $1\leq v\leq s_j$. Let
$e_{P_j}(K/k)=l^{\beta_j}$, $1\leq \beta_j\leq n$, $1\leq j\leq r$ and
$e_{\infty}(K/k)=l^t$, $0\leq t\leq n$.

We have that $e_{P_j}(K_{\varepsilon}/k)=e_{P_j}(K_{\varepsilon,w}/k_w)=l^{g_{j,
\varepsilon}}$, $1\leq\varepsilon\leq s$ so that $\beta_j=\max_{1\leq\varepsilon
\leq s}\{g_{j,\varepsilon}\}$.

We have $K_{\varepsilon,w}=k_w(\sqrt[l^{n_{\varepsilon}}]{\gamma_{\varepsilon}
{\mc D}_{\varepsilon}})
=k_w\big(\sqrt[l^{n_{\varepsilon}}]{\gamma_{\varepsilon}{\mc T}_{1,\varepsilon}
{\mc T}_{2,\varepsilon}\cdots {\mc T}_{r,\varepsilon}}\big)$,
where ${\mc D}_{\varepsilon}\in{\ma F}_{q^w}[T]$ and
\begin{gather*}
{\mc T}_{j,\varepsilon}=\pol j{\varepsilon},\\
0\leq \alpha_{j,\nu,\varepsilon}\leq l^n-1,\quad
1\leq\varepsilon\leq s, \quad 1\leq j\leq r, \quad 1\leq \nu\leq s_j.
\end{gather*}

Then $g_{j,\varepsilon}=0$ if $\alpha_{j,\nu,\varepsilon}=0$
for some $1\leq \nu\leq s_j$,
and $g_{j,\varepsilon}=n_{\varepsilon}-v_l(\alpha_{j,\nu,\varepsilon})$ 
when $\alpha_{j,\nu,\varepsilon}\neq 0$ for all
$1\leq \nu\leq s_j$.

Set $\langle\sigma_{\varepsilon}\rangle\cong \Gal(K_{\varepsilon}/k)\cong
\Gal(K_{\varepsilon,w}/k_w)\cong C_{l^{n_{\varepsilon}}}$.
We have $\langle\tau\rangle\cong \Gal(K_{\varepsilon,w}/
K_{\varepsilon})\cong \Gal(k_w/k)\cong C_w$.
\[
\xymatrix{
K_{\varepsilon}\ar@{-}[rr]^{\langle\tau\rangle}_w\ar@{-}[d]_{\langle
\sigma_{\varepsilon}\rangle}^{l^{n_{\varepsilon}}}
&&K_{\varepsilon,w}\ar@{-}[d]_{l^{n_{\varepsilon}}}^{\langle\sigma_{\varepsilon}\rangle}\\
k\ar@{-}[rr]_{\langle\tau\rangle}^w&&k_w
}
\]

Since $\tau(\z n)=\z n^{\eta}$ and $\ord(\eta\bmod l^n)=w$,
we have that $\tau$ acts transitively on the set $\{{\mc P}_{j,1},
{\mc P}_{j,2},\ldots,{\mc P}_{j,s_j}\}$ and we may assume that $\tau({\mc P}_{j,\nu})=
{\mc P}_{j,\nu+1}$, $1\leq \nu\leq s_j-1$ and $\tau({\mc P}_{j,s_j})={\mc P}_{j,1}$.

Let $\delta_{\varepsilon}=\sqrt[l^{n_{\varepsilon}}]
{\gamma_{\varepsilon}{\mc D}_{\varepsilon}}$,
$\delta_{\varepsilon}^{l^{n_{\varepsilon}}}=\gamma_{\varepsilon}
{\mc D}_{\varepsilon}$.
Let $\epsilon_{\varepsilon}=\tau(\delta_{\varepsilon})$. From (\ref{Ec2}),
(\ref{Ec3}) and (\ref{Ec4}) we obtain
\begin{gather*}
\sigma(\epsilon_{\varepsilon})=\z n^{\eta}\epsilon_{\varepsilon},\quad
\epsilon_{\varepsilon}=\lambda_{\varepsilon}\delta_{\varepsilon}^{\eta},
\intertext{and}
{\mc T}_{j,\varepsilon}=\big(\poly j\big)^{\alpha_{j,\varepsilon}},
\end{gather*}
where $\alpha_{j,\varepsilon}:=\alpha_{j,s_j,\varepsilon}$, $1\leq j\leq r$ and
$1\leq \varepsilon\leq s$, and for some $\lambda_{\varepsilon}\in k_w$.
Hence 
\begin{gather}\label{Ec8}
\deg_{k_w}({\mc D}_{\varepsilon})=\sum_{j=1}^r\deg_{k_w}{\mc T}_{j,\varepsilon}
=\sum_{j=1}^r\sum_{\nu=1}^{s_j} \eta^{s_j-\nu}
\alpha_{j,\varepsilon}\frac{c'_jl^{d'_j}}{s_j}=\sum_{j=1}^r\alpha_{j,\varepsilon}
\frac{c'_jl^{d'_j}}{s_j}\frac{\eta^{s_j}-1}{\eta-1}.
\end{gather}

Therefore 
\[
e_{\infty}(K_{\varepsilon}/k)=e_{\infty}(K_{\varepsilon,w}/k_w)=
\frac{l^{n_{\varepsilon}}}{\gcd(\deg_{k_w}({\mc D}_{\varepsilon}),
l^{n_{\varepsilon}})}=
l^{n_{\varepsilon}-\min\{n_{\varepsilon},v_l(\deg_{k_w}(
{\mc D}_{\varepsilon}))\}}
\]
where $\deg_{k_w}({\mc D}_{\varepsilon})$ is given by (\ref{Ec8}).
Hence, 
\begin{gather}
e_{\infty}(K/k)=e_{\infty}(K_w/k_w)=l^t\nonumber
\intertext{with}
t=\max_{1\leq \varepsilon\leq s}\{n_{\varepsilon}-\min\{n_{\varepsilon},v_l(\deg_{k_w}(
{\mc D}_{\varepsilon}))\}\}.\label{Ec9}
\end{gather}

Now, since $e_{P_j}(K/k)=e_{P_j}(E/k)=l^{\beta_j}$, where
$E$ is given by (\ref{Ec1}), we have that
$\ge E=\prod_{j=1}^rF_j$ with $k\subseteq F_j\subseteq \cicl {P_j}{}$
and $[F_j:k]=l^{\beta_j}$.

From (\ref{Ec4}), we obtain that
$F_{j,w}=k_w\Big(\sqrt[l^{\beta_j}]{\*{\big(\poly
j\big)}}\Big)=
k_w\big(\sqrt[l^{\beta_j}]{{\mc Q}_j^*}\big)$, where
\begin{gather}\label{Ec10}
{\mc Q}_j:=\poly j,\quad 1\leq j\leq r.
\end{gather}

We have 
\begin{align*}
e_{\infty}(F_j/k)=e_{\infty}(F_{j,w}/k_w)&=\frac{l^{\beta_j}}
{\gcd(\deg_{k_w}({\mc Q}_j),l^{\beta_j})}
=\frac{l^{\beta_j}}{\gcd(
l^{v_l(\deg_{k_w}({\mc Q}_j))},l^{\beta_j})}\\
&=l^{\beta_j-\min\{\beta_j,
v_l(\deg_{k_w}({\mc Q}_j))\}},
\end{align*}
where $\deg_{k_w}({\mc Q}_j)=\frac{c'_jl^{d'_j}}{s_j}\frac{\eta^{s_j}-1}{\eta-1}$
(see (\ref{Ec8})). Therefore
\begin{align*}
l^m:&=e_{\infty}(\ge K/k)=e_{\infty}(\gev K/k_w)=e_{\infty}(\ge E/k)\\
&=e_{\infty}(\gev E/k)=\lcm_{1\leq j\leq r}e_{\infty}(F_j/k).
\end{align*}
Hence
\begin{gather}\label{Ec11}
m=\max_{1\leq j\leq r}\{\beta_j-\min\{\beta_j,v_l(\deg_{k_w}({\mc Q}_j))\}\}.
\end{gather}

Now, $\g K$ is the extension $K\subseteq \g K\subseteq \ge K$ such that
$e_{\infty}(\g K/K)=1$ and $[\ge K:\g K]=l^{m-t}$. Thus, by the Galois 
correspondence, $\gv K$ is the extension $K_w\subseteq \gv K\subseteq 
\gev K$ such that
$e_{\infty}(\gv K/K_w)=1$ and $[\gev K:\gv K]=l^{m-t}$. 

Our main result is the explicit description of $\gv K$.

\begin{theorem}\label{T4.2}
Let $K/k$ be a finite non-Kummer $l$--extension 
of $k$ with Galois group  $\Gal(K/k)\cong C_{l^{n_1}}\times 
\cdots \times C_{l^{n_s}}$ where $n=n_1\geq n_2
\geq \cdots \geq n_s$ and $l^n\nmid q-1$. Let
$K=K_1\cdots K_s$ be such that $\Gal(K_{\varepsilon}/k)
\cong C_{l^{n_{\varepsilon}}}$, $1\leq \varepsilon\leq s$.

Let $P_1,\ldots,P_r$ be the finite primes in $k$ ramified in
$K$ with $P_1,\ldots,P_r\in R_T^+$ distinct. Let 
$\deg P_j=c'_jl^{d'_j}$ with $\gcd(c'_j,l)=1$, $1\leq j\leq r$. Let
\begin{gather*}
e_{P_j}(K/k)=l^{\beta_j},\quad
1\leq \beta_j\leq n, \quad 1\leq j\leq r,\quad\text{and}\quad
e_{\infty}(K/k)=l^{t}, \quad 0\leq t\leq n
\end{gather*}
given by {\rm{(\ref{Ec9})}}.
We order $P_1,\ldots, P_r$ so that $n=\beta_1\geq
\beta_2\geq \ldots \geq \beta_r$.

Let $E$ be given by {\rm{(\ref{Ec1})}}. 
The maximal cyclotomic extension $M$ of $E$, unramified 
at the finite primes, is
given by $M=\ge E= 
\prod_{j=1}^r F_j$ where $F_j$ is the only field satisfying
$k\subseteq F_j\subseteq \cicl {P_j}{}$ and $[F_j:k]=l^{\beta_j}$.

Let $w=[\F(\z n):\F]>1$, $\langle\tau\rangle=\Gal(k_w/k)$ with
$\tau(\z n)=\z n^{\eta}$, where $\ord(\eta\bmod l^n)=w$. Let
$\con_{k/k_w}P_j =\polynr j$, $1\leq j\leq r$ and
\[
{\mc Q}_j:= \poly j\in {\ma F}_{q^w}[T].
\]

Set $\deg_{k_w}({\mc Q}_j)=c_jl^{d_j}$ with $l\nmid c_j$.
Then we have $F_{j,w}=k_w\big(\sqrt[l^{\beta_j}]{{\mc Q}_j^*}\big)$
and $K_{\varepsilon,w}=k_w\big(\sqrt[l^{n_{\varepsilon}}]{
\gamma_{\varepsilon} {\mc D}_{\varepsilon}}\big)$ for some
$\gamma_{\varepsilon}\in {\ma F}_{q^w}^*$ and 
${\mc D}_{\varepsilon}=
\prod_{j=1}^r {\mc T}_{j,\varepsilon}$ where ${\mc T}_{j,\varepsilon}$
is given by ${\mc T}_{j,\varepsilon}=
\big(\poly j\big)^{\alpha_{j,\varepsilon}}$ where
$0\leq \alpha_{j,\varepsilon}
\leq l^{n_{\varepsilon}}-1$.

Let $l^m=e_{\infty}(\ge K/k)=e_{\infty}(\gv K/k_w)$. Then
$m=\max\limits_{1\leq j\leq r}\big\{\beta_j-\min\{\beta_j,d_j\}\big\}$,
(see {\rm{(\ref{Ec11})}}).

Choose $i$ such that $m=\beta_i-\min\{\beta_i,d_i\}$
and such that for $j>i$ we have $m>\beta_j
-\min\{\beta_j,d_j\}$. That is, $i$ is the largest
index obtaining $l^{m}$ as the ramification index of 
$\con_{k/k_w}\p={\mc P}_{\infty}$,
the infinite prime of ${\ma F}_{q^w}[T]$.

In case $m=t=0$ we have $M=\g E=\prod_{j=1}^r F_j$, $\gv E=
\prod_{j=1}^rk_w\big(\sqrt[l^{\beta_j}]{{\mc Q}_j^*}\big)$ 
and $\gv K=\gv E K_w$.

In case $m>t\geq 0$ or $m=t>0$, we have $\min\{
\beta_i,d_i\}=d_i$ and $m=\beta_i-d_i$. Let $a,b\in{\ma Z}$ be such that
$a\deg_{k_w}({\mc Q}_i)+b l^{n+d_i}=l^{d_i}=\gcd(l^{n+d_i},
\deg_{k_w}( {\mc Q}_i))$. Set $z_j=-a
\frac{\deg_{k_w}({\mc Q}_j)}{l^{d_i}}=-ac_jl^{d_j-d_i}\in 
{\ma Z}$ for $1\leq j\leq i-1$.
For $j>i$, consider $y_j\in{\ma Z}$ with $y_j\equiv -{c}_i^{-1}c_j\bmod l^n
\equiv -ac_j \bmod l^n$. Let
\[
L_j=
\begin{cases}
\Kuu{l^{\beta_j}}{{\mc Q}_j{\mc Q}_i^{z_j}}&\text{if $j<i$},\\
\Kuu{l^{d_i+t-u-v}}{\*{{\mc Q}_i}}&\text{if $j=i$},\\
\Kuu{l^{\beta_j}}{{\mc Q}_j{\mc Q}_i^{y_jl^{d_j-d_i}}}&\text{if $j>i$ and $d_j\geq d_i$},\\
\Kuu{l^{\beta_j+d_i-d_j}}{{\mc Q}_j^{l^{d_i-d_j}}{{\mc Q}_i^{y_j}}}&
\text{if $j>i$ and $d_i> d_j$},
\end{cases}
\]
where $l^{u}=\frac{[{\ma F}_{q^w}(\sqrt[l^{n_1}]{\mu_1},\ldots, 
\sqrt[l^{n_s}]{\mu_s}):{\ma F}_{q^w}]}{\deg_{K_w}({\mc P}_{\infty})}$, 
$\mu_{\varepsilon} :=(-1)^{\deg_{k_w}({\mc D}_{\varepsilon})}
\gamma_{\varepsilon}$, $1\leq {\varepsilon}\leq s$,
$\deg_{k_w}({\mc P}_{\infty})=\frac{\deg_K(P_{\infty})}{\gcd(\deg_K(
P_{\infty}),w)}$
and $l^v=\frac{\gcd([EK:K],w)}{\gcd(\deg_K(P_{\infty}),w)}$.
Equivalently, 
\[
l^{u+v}=\frac{[{\ma F}_{q^w}(\sqrt[l^{n_1}]{\mu_1},\ldots, 
\sqrt[l^{n_s}]{\mu_s}):{\ma F}_{q^w}]\cdot \gcd([EK:K],w)}{\deg_K(P_{\infty})}.
\]

Then $\gv K=L_1\cdots L_{i-1}L_iL_{i+1}\cdots L_rK$.
\end{theorem}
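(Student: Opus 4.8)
The plan is to push the whole problem down to the constant extension $k_w$, where everything becomes Kummer over $k_w$, and then to run the radical descent of the Kummer Theorem~\ref{T3.2} for $K_w/k_w$, watching the one place where $k_w/k$ forces the correction term $l^{v}$. Recall, from the discussion preceding the statement, that $\gv K$ is the unique field with $K_w\subseteq\gv K\subseteq\gev K$, $e_{\infty}(\gv K/K_w)=1$ and $[\gev K:\gv K]=l^{m-t}$. Since $\z n\in{\ma F}_{q^w}$, every field between $k_w$ and $\gev K$ is Kummer over $k_w$; combining $\ge E=\prod_{j=1}^{r}F_j$ with $F_{j,w}=k_w\big(\sqrt[l^{\beta_j}]{{\mc Q}_j^*}\big)$ (see (\ref{Ec4}), (\ref{Ec10})) and $K_{\varepsilon,w}=k_w\big(\sqrt[l^{n_{\varepsilon}}]{\gamma_{\varepsilon}{\mc D}_{\varepsilon}}\big)$ with ${\mc D}_{\varepsilon}=\prod_{j}{\mc Q}_j^{\alpha_{j,\varepsilon}}$ gives the radical description $\gev K=k_w\big(\sqrt[l^{\beta_j}]{{\mc Q}_j^*},\ \sqrt[l^{n_{\varepsilon}}]{\gamma_{\varepsilon}{\mc D}_{\varepsilon}}\ :\ 1\le j\le r,\ 1\le\varepsilon\le s\big)$, the relevant degrees $\deg_{k_w}({\mc Q}_j)=c_jl^{d_j}$ and $\deg_{k_w}({\mc D}_{\varepsilon})$ being those of (\ref{Ec8}) and $m,t$ those of (\ref{Ec11}), (\ref{Ec9}). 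Thus we are in the situation of Theorem~\ref{T3.2} applied to $K_w/k_w$, with the essential proviso that the object wanted is $\gv K=\g K\,{\ma F}_{q^w}$, not the genus field of $K_w$ over $k_w$.

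Now form the candidate $\mathcal L:=L_1\cdots L_rK$ by the descent of \cite[Theorems 3.4 and 3.6]{RzeVil2021}. For $j\ne i$ one twists the radicand of $F_{j,w}$ by a power of ${\mc Q}_i$ so that its $k_w$-degree becomes divisible by the corresponding exponent ($l^{\beta_j}$, or $l^{\beta_j+d_i-d_j}$ when $d_i>d_j$); writing $ac_i\equiv 1\bmod l^{n}$, equivalently $a\deg_{k_w}({\mc Q}_i)\equiv l^{d_i}\bmod l^{n+d_i}$, the exponents $z_j=-ac_jl^{d_j-d_i}$ (here $d_j\ge d_i$ for $j<i$) and $y_j\equiv -ac_j\bmod l^{n}$ (for $j>i$) are exactly the $l$-adic normalization that makes ${\mc P}_{\infty}=\con_{k/k_w}\p$ unramified in $L_jK_w/K_w$, while the prime support of every radicand stays inside $\{P_j,P_i\}$. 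For $j=i$ one keeps $\sqrt[l^{e}]{{\mc Q}_i^*}$ with $e$ as large as possible subject to $e_{\infty}(L_iK_w/K_w)=1$; the assertion is $e=d_i+t-u-v$. It remains to verify that $\mathcal L/K$ is (i) unramified at every finite prime, (ii) satisfies $e_{\infty}(\mathcal L/K_w)=1$, and (iii) satisfies $[\gev K:\mathcal L]=l^{m-t}$.

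Property (i) is the $({\ma Z}/l^{n})$-exponent bookkeeping already carried out in the Kummer case: each $L_j$ is generated by radicals of order dividing $l^{\beta_j}$ of polynomials supported on $\{P_j,P_i\}$, and the choices of $z_j,y_j$ make both the $P_j$-part (ramification index dividing $e_{P_j}(K/k)=l^{\beta_j}$) and the $P_i$-part absorbed by $K$ without ramification, as in \cite[Theorem 3.6]{RzeVil2021}. For (ii) and (iii), the factors $L_j$ with $j\ne i$ contribute neither ramification at ${\mc P}_{\infty}$ over $K_w$ nor extra degree drop, because their radicands have $k_w$-degree divisible by the exponent, so the issue collapses to the choice of $e$ in the $j=i$ factor. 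There one checks that the naive maximum allowing $e_{\infty}(L_iK_w/K_w)=1$ would be $e=d_i+t$, but the inertia subgroups at ${\mc P}_{\infty}$ coming from $k_w(\sqrt[l^{\beta_i}]{{\mc Q}_i^*})/k_w$ and from $K_w/k_w$ fail to align perfectly, the misalignment being measured by $l^{u}$ and $l^{v}$, whence $e=d_i+t-u-v$; this is the point at which $\mu_{\varepsilon}=(-1)^{\deg_{k_w}({\mc D}_{\varepsilon})}\gamma_{\varepsilon}$, the quantity $l^{u}=[{\ma F}_{q^w}(\sqrt[l^{n_1}]{\mu_1},\ldots,\sqrt[l^{n_s}]{\mu_s}):{\ma F}_{q^w}]/\deg_{K_w}({\mc P}_{\infty})$, and $l^{v}$ come in. In the degenerate case $m=t=0$ no descent is needed: there $\ge E=\g E$, so $\gev K=\gv E\,K_w$ already has $e_{\infty}(\,\cdot\,/K_w)=1$ and equals $\gv K$.

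The main obstacle is precisely the factor $l^{v}$: a bare application of Theorem~\ref{T3.2} to $K_w/k_w$ produces the genus field of $K_w$ over $k_w$ (in which the $j=i$ exponent would be $l^{d_i+t-u}$), whereas what is needed is $\gv K=\g K\,{\ma F}_{q^w}$. The genus field of $K_w/k_w$ forces every prime of $K_w$ above ${\mc P}_{\infty}$ to split completely, while $\g K$ only controls the primes of $K$ above $\p$, and $\p$ --- of degree one over $k$ --- need not remain prime after composition with the cyclotomic field $E$. Turning this into a number uses the identity $\g K=\g E^{H}K$, with $H$ the decomposition group of the infinite primes in $EK/K$ (from \cite{BaMoReRzVi2018}), together with a comparison of $H$ with the corresponding decomposition group after the unramified base change $k_w/k$; the index discrepancy between the two is exactly $l^{v}=\gcd([EK:K],w)/\gcd(\deg_K(\p),w)$. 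Once this is isolated, the ``Equivalently'' identity $l^{u+v}=[{\ma F}_{q^w}(\sqrt[l^{n_1}]{\mu_1},\ldots,\sqrt[l^{n_s}]{\mu_s}):{\ma F}_{q^w}]\cdot\gcd([EK:K],w)/\deg_K(\p)$ follows from $\deg_{K_w}({\mc P}_{\infty})=\deg_K(\p)/\gcd(\deg_K(\p),w)$, and reassembling $L_1,\ldots,L_r$ gives $\gv K=L_1\cdots L_rK$, as claimed.
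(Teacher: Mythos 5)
Your overall strategy coincides with the paper's: base-change to $k_w$, run the Kummer descent of Theorem \ref{T3.2} on the radicands ${\mc Q}_j$, and observe that the only genuinely new ingredient is the correction $l^v$ that converts the genus field of $K_w$ over $k_w$ (which would give exponent $d_i+t-u$ at $j=i$) into $\gv K=\g K\,{\ma F}_{q^w}$ (exponent $d_i+t-u-v$). Your identification of where $l^v$ enters --- the discrepancy between the decomposition group $H$ of the infinite primes in $EK/K$ and the decomposition group $H'$ in $E_wK_w/K_w$ --- is also exactly the paper's.

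However, at that one point your argument asserts rather than proves the key identity: the sentence ``the index discrepancy between the two is exactly $l^{v}=\gcd([EK:K],w)/\gcd(\deg_K(\p),w)$'' is precisely the statement that needs to be established, not a consequence of anything preceding it, and it is the only quantitative step of the proof that is not already contained in the Kummer case. The paper derives it by computing the field of constants of $E_wK_w$ in two ways. Since $EK/K$ is an extension of constants (by the proof of Theorem 2.2 of \cite{BaMoReRzVi2018}) and $K$ has field of constants $\F$, the constants of $EK$ form ${\ma F}_{q^{|H|\deg_K(\p)}}$, so the constants of $E_wK_w=(EK){\ma F}_{q^w}$ form ${\ma F}_{q^{\lcm[|H|\deg_K(\p),\,w]}}$; on the other hand, $E_wK_w/K_w$ is an extension of constants of degree $|H'|\deg_{k_w}({\mc P}_{\infty})$ over the constant field ${\ma F}_{q^w}$ of $K_w$, which gives ${\ma F}_{q^{|H'|\lcm[\deg_K(\p),\,w]}}$. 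Equating the two exponents and simplifying the least common multiples yields $|H'|=|H|\gcd(\deg_K(\p),w)/\gcd([EK:K],w)$, i.e.\ the stated value of $l^v=|H|/|H'|$. Without this (or an equivalent) computation, the exponent $d_i+t-u-v$ in $L_i$ --- and hence the final identity $\gv K=L_1\cdots L_rK$ --- is not justified. The remaining verifications (i)--(iii) that you outline do reduce to the Kummer-case arguments of \cite{RzeVil2021}, exactly as the paper also indicates.
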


\begin{proof}
Let $H$ and $H'$ be the decomposition groups of $EK/K$ 
and of $E_wK_w/K_w$, respectively.
Set $|H|=l^{\xi}$ and $|H'|=l^u$. We have $u\leq \xi$. Theorem \ref{T3.2} gives
the value of $u$. That is $l^u=\frac{[{\ma F}_{q^w}(\sqrt[l^{n_1}]{\mu_1},\ldots, 
\sqrt[l^{n_s}]{\mu_s}):{\ma F}_{q^w}]}{\deg_{K_w}({\mc P}_{\infty})}$.
Set $\xi=u+v$, that is, $|H|=|H'|l^v$, for some $v\geq 0$.

From \cite[proof of Theorem 2.2]{BaMoReRzVi2018} 
we have that $EK/K$ is an extension of
constants and since we are assuming that the field of constants of $K$ is 
$\F$, we have that the field of constants of $EK$ is ${\ma F}_{q^{\chi}}$ where
$\chi=[EK:K]=[E^HK:K]|H|$. By the same reason, $EK/E$ is an extension of
constants and $[EK:E]=f_{\infty}(EK/k)=\deg_{EK}(P_{\infty}) =f_{\infty}(EK/K)
f_{\infty}(K/k)=|H|\deg_K(P_{\infty})$.

Therefore, $[E^H:K]=\deg_K(P_{\infty})$, $[EK:K]=[EK:E]$ and ${\ma F}_{q^{
|H|\deg_K(P_{\infty})}}$ is the field of constants of $EK$.

Since $E_wK_w=(EK)_w=EK{\ma F}_{q^w}$, the field of constants of $E_wK_w$
is 
\begin{gather}\label{Ec12}
{\ma F}_{q^{\lcm[|H|\deg_K(P_{\infty}),w]}}=
{\ma F}_{q^{\lcm[[EK:K],w]}}.
\end{gather}

 On the other hand, $E_wK_w/K_w$ is
an extension of constants of degree $[E_wK_w:K_w]=|H'|
\deg_{k_w}({\mc P}_{\infty})$.
Since the field of constants of $K_w$ is ${\ma F}_{q^w}$, we have that the field
of constants of $E_wK_w$ is
\begin{gather}\label{Ec13}
{\ma F}_{q^{|H'|\deg_{k_w}({\mc P}_{\infty}) w}}=
{\ma F}_{q^{|H'|\lcm[\deg_K(P_{\infty}),w]}}.
\end{gather}

From (\ref{Ec12}) and (\ref{Ec13}), we obtain
\begin{gather*}
\lcm[|H|\deg_K(P_{\infty}),w]=|H'|\lcm[\deg_K(P_{\infty}),w].
\intertext{Now}
\begin{align*}
\lcm[|H|\deg_K(P_{\infty}),w]&=\frac{|H|\deg_K(P_{\infty})w}{
\gcd(|H|\deg_K(P_{\infty}),w)}\\
&=|H|\frac{\lcm(\deg_K(P_{\infty}),w)
\gcd(\deg_K(P_{\infty}),w)}{\gcd(|H|\deg_K(P_{\infty}),w)}.
\end{align*}
\intertext{Therefore}
|H'|=|H|\frac{\gcd(\deg_K(P_{\infty}),w)}{\gcd(
|H|\deg_K(P_{\infty}),w)}=|H|\frac{\gcd(\deg_K(P_{\infty}),w)}{\gcd(
[EK:K],w)}.
\end{gather*}

Hence $l^v=\frac{\gcd(
[EK:K],w)}{\gcd(\deg_K(P_{\infty}),w)}$.

The rest of the proof follows the lines of the proof of Theorem \ref{T3.2} (see
\cite[Theorem 3.4]{RzeVil2021}).
\end{proof}

\begin{remark}\label{R4.3}{\rm{
In Theorem \ref{T4.2}, the Kummer case occurs when
we allow $w=1$ and Theorem
\ref{T3.2} in this sense can be considered
the special case $w=1$ in Theorem \ref{T4.2}.
}}
\end{remark}

\section{Genus fields of finite abelian extensions of $k$}

In \cite[Theorem  4.1]{RzeVil2021}, we obtained the following result.

\begin{theorem}\label{T5.1}
Let $J_i/k$, $i=1,2$ be two finite abelian extensions such that
$\gcd([J_1:k],[J_2:k])=1$. Then $\g{(J_1)}\g{(J_2)}=\g{(J_1
J_2)}$. $\fin$
\end{theorem}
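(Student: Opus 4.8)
The plan is to reduce Theorem~\ref{T5.1} to a statement purely about the maximal abelian subextensions of the Hilbert class fields, using the fact that everything in sight is controlled by ramification data that splits cleanly according to the coprime degrees $[J_1:k]$ and $[J_2:k]$. First I would fix notation: write $d_i=[J_i:k]$, so $\gcd(d_1,d_2)=1$, and recall that $\g{(J_i)}=J_i\cdot{}\*{k}_i$ where $\*{k}_i$ is the maximal abelian (over $k$) subextension of the Hilbert class field $(J_i)_H$, and similarly $\g{(J_1J_2)}=J_1J_2\cdot{}\*{k}$ with $\*{k}$ the maximal abelian-over-$k$ subextension of $(J_1J_2)_H$. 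Since $J_1\cap J_2=k$ (their degrees are coprime), we have $[J_1J_2:k]=d_1d_2$ and $\Gal(J_1J_2/k)\cong\Gal(J_1/k)\times\Gal(J_2/k)$, and every subextension of $J_1J_2/k$ decomposes accordingly as a product of a subextension of $J_1$ and one of $J_2$.

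The key step is to show $(J_1J_2)_H=(J_1)_H\cdot(J_2)_H\cdot J_1J_2$ in the relevant sense, or more precisely that the genus field decomposes. I would argue as follows: the inclusion $\g{(J_1)}\g{(J_2)}\subseteq\g{(J_1J_2)}$ is the easy direction — each $\g{(J_i)}/k$ is abelian and unramified over $J_i$ with the places of $S$ decomposing fully, hence $\g{(J_i)}J_1J_2/J_1J_2$ is unramified with the $S$-places split (unramifiedness and splitting are preserved by the base change $J_1J_2/J_i$, since any ramification or non-splitting in the compositum would have to come from $J_1J_2/J_i$, which is controlled), and the compositum $\g{(J_1)}\g{(J_2)}/k$ is abelian; so it lands inside $\g{(J_1J_2)}$. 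For the reverse inclusion, let $N=\g{(J_1J_2)}$. Then $N/k$ is abelian, so $N\cdot\overline{k}$ (the constant field extension issues aside) and $\Gal(N/k)$ is abelian; I would decompose $\Gal(N/J_1J_2)$ and use that $N\supseteq J_1J_2$ with $N/J_1J_2$ unramified. The crucial point: because $N/k$ is abelian and $J_1J_2/k$ has Galois group a product of coprime-order groups, one can write $N=N_1N_2$ where $N_i$ is the fixed field appropriate to the $d_{3-i}$-part, and one checks $N_i\subseteq\g{(J_i)}$ by verifying $N_i/k$ abelian, $N_i/J_i$ unramified, and $S$-places split in $N_i/J_i$ — the latter two following by descent from $N/J_1J_2$ since $[N_i J_{3-i}:N_i]$ divides $d_{3-i}$, which is coprime to the relevant ramification and residue degrees.

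The main obstacle I expect is the bookkeeping around the set $S$ of places (in particular the infinite prime $\p$) and the constant-field extensions: the Hilbert class field here is defined relative to $S$, and ``unramified with $S$ split'' must be tracked carefully under the coprime-degree base changes, since a place could a priori behave differently in $J_1$, in $J_2$, and in $J_1J_2$. The resolution is that for a place $v$ of $k$, its decomposition and inertia groups in $J_1J_2/k$ are the products of those in $J_i/k$, and an element of order prime to $d_{3-i}$ in $\Gal(\g{(J_i)}/k)$ cannot acquire extra inertia or lose splitting when composed with $J_{3-i}$; this is where $\gcd(d_1,d_2)=1$ does all the work. I would also invoke the description via the Artin map: $\Gal(\g{(J_i)}/k)$ is a quotient of a ray-class-type group, and the coprimality makes the relevant $l$-primary decompositions orthogonal, so $\g{(J_1J_2)}$ visibly factors. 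Since the statement is cited as \cite[Theorem 4.1]{RzeVil2021}, I would present this as a proof sketch and refer there for the detailed ramification computations. $\fin$
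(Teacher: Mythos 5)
The paper itself contains no proof of Theorem~\ref{T5.1}: the statement is simply quoted from \cite[Theorem 4.1]{RzeVil2021}, where the argument runs through the Dirichlet--character description of genus fields ($\ge K$ corresponds to the group $\prod_P X_P$ of $P$--components of the character group $X$ of $K$; coprimality of $[J_1:k]$ and $[J_2:k]$ forces the character groups, hence their $P$--components and the decomposition group of $\p$, to factor). Your sketch takes a genuinely different and more self-contained route, working directly with inertia and decomposition groups in the definition of $K_H$, and in outline it is correct: the forward inclusion is exactly as you say, and for the reverse inclusion the coprimality mechanism you identify is the right one. The one step that needs repair is ``one can write $N=N_1N_2$ where $N_i$ is the fixed field appropriate to the $d_{3-i}$-part'': since $[N:J_1J_2]$ is essentially a class number, $\Gal(N/k)$ may have $l$-primary components for primes $l$ dividing neither $d_1$ nor $d_2$, so a two-block decomposition indexed by $d_1$ and $d_2$ does not obviously exhaust $\Gal(N/k)$. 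The clean fix is to write $\*k$ (the maximal abelian-over-$k$ subfield of $(J_1J_2)_H$) as the compositum of its $l$-primary pieces ${\*k}^{(l)}$ and to check that, whenever $l\nmid d_2$, the extension ${\*k}^{(l)}J_1/J_1$ is unramified with the infinite places split: indeed $e_P({\*k}^{(l)}J_1/J_1)$ divides both $[{\*k}^{(l)}:k]$ (an $l$-power) and $e_P(J_1J_2/k)/e_P(J_1/k)=e_P(J_2/k)$, which divides $d_2$, hence it equals $1$, and the same coprimality argument kills the decomposition group at $\p$; pieces with $l$ dividing neither degree are absorbed into either factor. With that adjustment your argument is complete and arguably more elementary than the character-theoretic proof, at the cost of not meshing with the explicit conductor/$P$-component machinery that the rest of the paper is built on.
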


As a consequence of the previous results, we have the explicit
description of any finite abelian $K/k$.  For a finite non-trivial $l$-group
$S$, we denote by $\exp(S)=l^n$ where $n$ is the minimum
natural number $n$ such that $S^{l^n}=\{1\}$.

\begin{theorem}[Genus field of an abelian extension]\label{T5.2}
Let $K/k$ be a finite abelian extension with Galois group $G=\Gal
(K/k)$. Let $S_1,S_2,\ldots,S_t$ be the different Sylow subgroups
of $G$ with $S_j$ the $l_j$-Sylow subgroup of $G$. Let $K=K_1\cdots
K_t$ be such that $\Gal(K_j/k)\cong S_j$, $1\leq j\leq t$. Then
\[
\g K=\prod_{j=1}^t \g{(K_j)},
\]
where $\g{(K_j)}$ is given by
\[
\begin{cases}
\text{Theorem {\rm{\ref{T3.1}}}}&\text{if $l_j=p$},\\
\text{Theorem {\rm{\ref{T3.2}}}}&\text{if $l_j\neq p$ and $\exp(S_j)|q-1$},\\
\text{Theorem {\rm{\ref{T4.2}}}}&\text{if $l_j\neq p$ and $\exp(S_j)\nmid q-1$}.
\end{cases}
\]
\end{theorem}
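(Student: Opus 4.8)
The proof of Theorem \ref{T5.2} is a direct application of Theorem \ref{T5.1} combined with the explicit descriptions already obtained in Theorems \ref{T3.1}, \ref{T3.2} and \ref{T4.2}. First I would recall that for a finite abelian group $G$ with Sylow decomposition $G\cong S_1\times\cdots\times S_t$, where $S_j$ is the $l_j$-Sylow subgroup, the field $K$ with $\Gal(K/k)\cong G$ decomposes as the internal product $K=K_1\cdots K_t$ with $\Gal(K_j/k)\cong S_j$ and $K_j/k$ linearly disjoint from $K_1\cdots \widehat{K_j}\cdots K_t$ over $k$; in particular $[K_j:k]=|S_j|$ is a power of $l_j$, so the degrees $[K_1:k],\ldots,[K_t:k]$ are pairwise relatively prime.

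Second, I would apply Theorem \ref{T5.1} inductively. Since $\gcd([K_1:k],[K_2\cdots K_t:k])=1$, one gets $\g K=\g{(K_1)}\,\g{(K_2\cdots K_t)}$, and iterating over $j=2,\ldots,t-1$ (each time the degree of $K_j$ is prime to the degree of the remaining product $K_{j+1}\cdots K_t$) yields $\g K=\prod_{j=1}^t\g{(K_j)}$. One small point worth spelling out is that Theorem \ref{T5.1} is stated for a product of two extensions, so the induction is on $t$; the base case $t=1$ is trivial and the inductive step is exactly one invocation of Theorem \ref{T5.1} with $J_1=K_1\cdots K_{t-1}$ and $J_2=K_t$ (or symmetrically), using that $[J_1:k]$ is a product of powers of $l_1,\ldots,l_{t-1}$ and hence coprime to $[J_2:k]=|S_t|$.

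Third, I would identify which explicit theorem computes each $\g{(K_j)}$. Each $K_j/k$ is a finite abelian $l_j$-extension. If $l_j=p=\car k$, then $\g{(K_j)}$ is given by Theorem \ref{T3.1}. If $l_j\neq p$, write $\exp(S_j)=l_j^{n}$; when $l_j^{n}\mid q-1$ the extension $K_j/k$ is Kummer and Theorem \ref{T3.2} applies, while when $l_j^{n}\nmid q-1$ it is non-Kummer and Theorem \ref{T4.2} applies. This exhausts all cases and matches the displayed case distinction, so the theorem follows.

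There is essentially no obstacle here: the content has already been done in the cited theorems and in Theorem \ref{T5.1}. The only thing requiring a line of care is the reduction from the two-factor statement of Theorem \ref{T5.1} to the $t$-factor product, i.e. checking the coprimality hypothesis at each stage of the induction, which is immediate since distinct Sylow subgroups have coprime orders. Everything else is bookkeeping: matching each $l_j$-part of $K$ to the appropriate one of Theorems \ref{T3.1}, \ref{T3.2}, \ref{T4.2} according to whether $l_j=p$, $l_j\neq p$ with $\exp(S_j)\mid q-1$, or $l_j\neq p$ with $\exp(S_j)\nmid q-1$.
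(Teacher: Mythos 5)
Your proposal is correct and follows exactly the paper's argument: the paper's proof is a one-line invocation of Theorems \ref{T3.1}, \ref{T3.2}, \ref{T4.2} and \ref{T5.1}, and your write-up merely spells out the routine induction on the number of Sylow factors and the case-matching that the paper leaves implicit.
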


\begin{proof}
The result is an immediate consequence of Theorems \ref{T3.1}, \ref{T3.2}, \ref{T4.2}
and \ref{T5.1}.
\end{proof}

\end{document}